\def\m{\mathcal}
\def\c2{\mathbb{C}^2}
\def\R{\mathbb{R}}
\def\1{\bold{1}}
\def\a{\alpha}
\def\f{\varphi}
\newcommand \w {\omega}
\newcommand \sm {\setminus}
\newtheorem{lem}{Lemma}[section]
\theoremstyle{definition}
\theoremstyle{plain}
\newtheorem{def/not}[lem]{Definition/Notations}
\numberwithin{equation}{section}
\newtheorem{cor}[lem]{Corollary}
\theoremstyle{definition}
\theoremstyle{plain}
\newenvironment{proof3.1}
{\noindent {\it{Proof of theorem 3.1}}}{$\Box$ \linebreak[4]}
\begin{document}

\title[ Complex Monge-Amp\`ere  Equations]
{Weak Solutions to Complex Monge-Amp\`ere  Equations on 
Compact K\"ahler Manifold }
\author{ Slimane BENELKOURCHI }
\address{ Universit\'e de Montr\'eal,
Pavillon 3744, rue Jean-Brillant,
Montr\'eal QC  H3C 3J7
}
\email{slimane.benelkourchi@umontreal.ca}
 \subjclass[2010]{ 32W20, 32Q25, 32U05.}
 \keywords{Complex Monge-Amp\`ere operator, Compact K\"ahler manifold, plurisubharmonic
functions.}
\maketitle
\begin{abstract}
We show a general existence theorem to the complex
Monge-Amp\`ere type equation on compact K\"ahler manifolds.
\end{abstract}
\section{Introduction}
Let $(X, \w)$ be a compact K\"ahler manifold of dimension $n.$ 
Throughout this note, $\theta$ denotes a smooth closed form of bidegree
$(1, 1)$ which is nonnegative and big, i.e. such that 
$\int _X \theta ^n >0.$ 
Recall that a $\theta -plurisubharmonic $ ($\theta-$psh for short)
 function is an upper semi-continuous function $\f$ such that
  $\theta +dd^c \f $ is
 nonnegative in the sense of currents.
 The set of all $\theta -psh$ functions $\f $ on $X$
  will be denoted by $PSH(X, \theta)$ and
endowed with the weak topology, which coincides with
 the $L^p (X)-$topology.
We shall consider the existence and uniqueness 
of the weak solution to the following complex Monge-Amp\`ere equations  
\begin{equation} \label{cma}
(\theta +dd ^c \f)^n = F( \f , \cdot) d \mu 
\end{equation}
 where $\f$ is a $\theta $-psh function, $F(t , x) \ge 0$ is a measurable function on $ \R \times X$
 and $ \mu $ is a positive measure.
It is well known that we can not always  make sense to the left hand side of (\ref{cma}) as nonnegative measure.
But according to \cite{BT 2}
  (see also \cite{BBGZ}, \cite{BEGZ}, \cite{GZ7}) , we can define the non
pluripolar product $(\theta + dd^c u)^n$ as the limit of 
$\mathbf{1}_{ (u>-j) }(\theta + dd^c (\max(u, -j))^n.$
 It was shown in \cite{BEGZ} that its trivial extension is nonnegative closed current and 
$$
  \int_X (\theta + dd^c u)^n \le \int _
 X \theta ^n.
 $$ 
 Denote by $\m E(X, \theta ) $ the set of all $\theta -$psh with full
 non-pluripolar Monge-Amp\`ere measure i.e. $\theta -$psh functions for which  the last inequality becomes equality.
 
For $F$ smooth and $ \mu = dV$ is a smooth positive volume form, the equation has been studied 
extensively by various authors, see for example \cite{A1}, \cite{A2}, \cite{BEGZ}, \cite{K05}, \cite{K00}, \cite{Lu},  \cite{Yau}
... and reference
therein. Recently, Ko\l odziej
 treated the case $F$  bounded by a function independent of
the first variable and  $\mu = \w^n,$ where $\w$ is a K\"ahler form on $X.$
In this paper we consider a more general case.
   Our main purpose  is to prove
the following theorem.
\\
\noindent {\it {\bf Main  Theorem.}
 Assume that $F : \R \times X \to [0, +\infty )$ 
 is a measurable function such that:\\
1) For all $x\in X $ the function $t \mapsto F(t, x)$ 
is continuous and  nondecreasing;\\
2) $ F(t,\cdot) \in L^1 (X, d\mu)$  for all $t\in \R ;$\\
3) $$ 
\lim_{t\to -\infty} \int _X F(t,x) d \mu \le  
\int _X \theta^n  < \lim _{t\to +\infty} \int_X F(t, x) d\mu.
$$
Then there exists a unique (up to additive constant) $\theta -psh $ function $\f \in \m E(X, \theta)$
solution to 
$$
(\theta +dd ^c  \f)^n = F( \f , \cdot) d \mu .
$$
}
\section{Proof}
\begin{lem}
Let $\mu $ be a positive measure on $X$ vanishing on all
pluripolar subsets of $X$  and
 $ u_j \in \m E(X, \theta ) $ 
such that  $u_j \ge u_0 $ for some $u_0 \in \m E(X, \theta)\cap L^1(d \mu).$
 
If $u_j \to u $ in $L^1(X) ,$ then 
$$
\lim_{j\to +\infty} \int _X u_j d \mu = \int _X u d \mu .
$$
\end{lem}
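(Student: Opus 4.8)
The plan is to prove the stronger statement $\int_X |u_j - u|\,d\mu \to 0$, from which the conclusion is immediate. First I record the basic reductions. Since $u_j \to u$ in $L^1(X)$, the sequence is uniformly bounded above, say $u_j \le C$, and the limit $u$ is again $\theta$-psh with $u_0 \le u \le C$ holding $\mu$-almost everywhere (the lower bound passing to the limit off a pluripolar set, which $\mu$ ignores). As $\mu$ is finite and $u_0 \in L^1(d\mu)$, the whole family is dominated by $g := |u_0| + C \in L^1(d\mu)$. I then split $\int_X |u_j - u|\,d\mu = \int_X (u_j - u)_+\,d\mu + \int_X (u - u_j)_+\,d\mu$ and treat the two parts separately.

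The control from above is clean and unconditional. I would use the decreasing regularized envelopes $v_k := (\sup_{j \ge k} u_j)^*$, which are $\theta$-psh and satisfy $v_k \ge u_k \ge u_0$; by Hartogs' lemma $v_k \searrow u$ quasi-everywhere, hence $\mu$-almost everywhere since $\mu$ charges no pluripolar set. From $u_j \le v_j$ and $v_j \ge u$ one gets $(u_j - u)_+ \le v_j - u$, and dominated convergence applied to $v_j - u \searrow 0$ (dominated by $v_1 - u \le C - u_0 \in L^1(d\mu)$) yields $\int_X (u_j - u)_+\,d\mu \to 0$. In particular this already gives $\limsup_j \int_X u_j\,d\mu \le \int_X u\,d\mu$.

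The negative part $\int_X (u - u_j)_+\,d\mu$, which measures the mass by which the $u_j$ are allowed to dip below $u$, is the genuinely delicate term. It is dominated by $u - u_0 \in L^1(d\mu)$, so the only task is to promote the Lebesgue-almost-everywhere convergence furnished by $L^1(X)$ to convergence $\mu$-almost everywhere. By the subsequence principle it suffices that every subsequence admits a further subsequence converging quasi-everywhere. Here I would exploit the finite-energy structure: using the uniform lower bound $u_j \ge u_0$ with $u_0 \in \m E(X,\theta) \cap L^1(d\mu)$, one controls $\mrm{Cap}(\{u_j < u - \delta\})$ by the $L^1(X)$-distance $\|u_j - u\|_{L^1(X)}$; passing to a fast subsequence with $\sum_k \|u_{j_k} - u\|_{L^1(X)} < \infty$ and applying a Borel--Cantelli argument in capacity gives convergence in capacity, whence a further subsequence converges quasi-everywhere, and therefore $\mu$-almost everywhere. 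Dominated convergence then forces $\int_X (u - u_j)_+\,d\mu \to 0$ for the full sequence.

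I expect this last upgrade to be the main obstacle. The difficulty is exactly that a Lebesgue-null set need not be pluripolar, so a priori $\mu$ could charge the set on which the $u_j$ fail to converge; it is precisely the hypotheses that $\mu$ vanishes on pluripolar sets and that the sequence is bounded below by a fixed function in $\m E(X,\theta) \cap L^1(d\mu)$ that rule this out, by forcing the exceptional set of non-convergence to be pluripolar. Once the negative part is dispatched, combining it with the positive-part estimate gives $\int_X |u_j - u|\,d\mu \to 0$, which in particular establishes $\lim_j \int_X u_j\,d\mu = \int_X u\,d\mu$.
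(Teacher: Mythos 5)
Your decomposition and your treatment of the positive part $(u_j-u)_+$ are sound: the envelopes $v_k=(\sup_{j\ge k}u_j)^*$ are $\theta$-psh, decrease to $u$ quasi-everywhere by Bedford--Taylor's negligible sets theorem, hence $\mu$-a.e., and dominated convergence applies. The gap is exactly where you yourself flag it: the estimate $\mathrm{Cap}(\{u_j<u-\delta\})\le C_\delta\, h\bigl(\|u_j-u\|_{L^1(X)}\bigr)$ is asserted, not proved, and it is not an off-the-shelf fact in the generality you need. Your functions are not uniformly bounded --- they are only bounded below by $u_0\in\mathcal{E}(X,\theta)\cap L^1(d\mu)$, which is in general unbounded --- whereas the known results converting $L^1$-convergence into convergence in capacity (Cegrell's convergence-in-capacity results, Xing-type inequalities) require uniform two-sided bounds. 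To complete your route you would need to truncate, $u_j^t=\max(u_j,-t)$ and $u^t=\max(u,-t)$, prove or precisely cite the capacity estimate for the uniformly bounded truncations, and then control the error region using $\mathrm{Cap}(\{u_0<-t\})\to 0$ and $\mu(\{u_0<-t\})\to 0$ as $t\to+\infty$; none of this appears in your write-up. Since this upgrade from Lebesgue-a.e.\ to $\mu$-a.e.\ convergence is precisely the mathematical content of the lemma (a Lebesgue-null set need not be $\mu$-null), the proposal is incomplete at its crux.

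For comparison, the paper avoids any quantitative capacity estimate by a softer, functional-analytic argument: the bound $u_j\ge u_0\in L^1(d\mu)$ gives uniform integrability of $(u_j)$ in $L^1(d\mu)$, so by the Dunford--Pettis theorem the sequence is weakly relatively compact in $L^1(d\mu)$; any weak cluster point is then identified with $u$ using Bedford--Taylor's theorem that negligible sets are pluripolar (so the exceptional set is $\mu$-null), and uniqueness of the cluster point gives weak $L^1(d\mu)$-convergence, in particular convergence of the integrals. Note also that your strategy in effect proves Corollary 2.2 (the Cegrell--Ko\l odziej extraction of a $\mu$-a.e.\ convergent subsequence) first and deduces the lemma from it, inverting the paper's logical order, where that corollary is a consequence of the lemma. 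If you supply the missing capacity input, your argument would yield the stronger conclusion $\int_X|u_j-u|\,d\mu\to 0$; as written, however, the step that does all the work is missing.
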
 
\begin{proof}
Since $u_0 \in   L^1(d \mu)$ and the measure $\mu$ puts no mass on pluripolar subsets of $X$ then 
$$
\int _\a ^{+\infty} \int _{(u_j <-t ) } d \mu dt
\le \int _\a ^{+\infty} \int _{(u_0 <-t ) } 
d \mu dt
  \to 0 \quad \mbox{ as }
 \a \to +\infty. 
$$
Hence, by Dunford-petits theorem (see for example \cite{Ed}  p. 274), we have the sequence 
$(u_j)$ is weakly relatively compact in $L^1(d \mu).$ Let $\hat{u} \in L^1(d \mu)$ be 
a cluster point of $(u_j).$ After extracting a subsequence, we may assume that 
$(u_j)$ converges to $\hat{u}$ weakly in $L^1(d \mu).$
On the other hand, we have $u_j \to u $ in $L^1(X) .$ So, choosing a subsequence if necessary,
we can assume that $u_j \to u $ point-wise on $X\sm A,$  
where $A = \{ \limsup _{j\to \infty} u_j <u\}.$ 
But $ A$ is negligible, hence, by \cite{BT 1} $A$ is pluripolar subset of $X,$
thus $\mu (A)=0.$ It follows from the  Lebesgue's dominated convergence theorem  that
 $u_j \to u $ weakly in $L^1(d \mu) .$ Therefore 
 $\hat{u} = u \ \mu -a.e.$ Hence $u$ is the unique cluster point of $(u_j),$
 which means that  $(u_j)$ converges to ${u}$ weakly in $L^1(d \mu)$ and the proof is complete.
 \end{proof}
The following corollary is the global 
version of Corollary 1.4 in \cite{CK6}. 
\begin{cor} \label{Ceg-kol}
Let $\mu $ be a nonnegative measure which puts no mass on pluripolar sets of $X.$
Then for any sequence $u_j \in \m E (X, \theta)$ converging weakly, one can extract a subsequence
which converges pointwise $d\mu -$ almost everywhere.
\end{cor}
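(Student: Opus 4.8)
The plan is to deduce Corollary \ref{Ceg-kol} from Lemma 1 by a standard diagonal/reverse-argument. Let $u_j \in \m E(X,\theta)$ converge weakly in $L^1(X)$ to some $u$. The first difficulty is that Lemma 1 requires a uniform lower bound $u_j \ge u_0$ with $u_0 \in \m E(X,\theta)\cap L^1(d\mu)$, which need not hold for an arbitrary weakly convergent sequence. So the first step is to reduce to this situation by truncation: replace each $u_j$ by the truncated function $\max(u_j,-k)$, or equivalently work on the sublevel-set decomposition, exploiting that convergence in $L^1(X)$ gives a locally uniform control that lets us pass to a subsequence along which the negative parts are dominated.

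Next I would argue as follows. After extracting a subsequence, we may assume $\sum_j \|u_j-u\|_{L^1(X)} < +\infty$; set $v_k := \big(\sup_{j\ge k} u_j\big)^{*}$, the upper semicontinuous regularization. Each $v_k$ is $\theta$-psh, the sequence $v_k$ is nonincreasing, and $v_k \searrow u$ outside a pluripolar set; moreover $u_j \le v_k$ for $j\ge k$ while $u \le v_k$, so the family is trapped between $u$ and $v_k$. The key point is to produce a single function $u_0 \in \m E(X,\theta)\cap L^1(d\mu)$ lying below all the relevant truncations; here one uses that $\m E(X,\theta)$ is stable under taking maxima and under the construction $w:=\big(\sup_k(v_k - c_k)\big)^{*}$ for suitable constants, or else one simply localizes on the sets $\{u>-m\}$ where everything is bounded below by $-m$ and apply Lemma 1 with $u_0\equiv -m$ after subtracting a constant. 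The measure $\mu$ puts no mass on pluripolar sets, so the exceptional pluripolar set where the regularizations differ from the genuine values is $\mu$-null.

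The main engine is then Lemma 1 itself: applied to the monotone comparison, it yields $\int_X v_k\, d\mu \to \int_X u\, d\mu$ and, combined with $u \le u_j \vee(\text{lower bound})$, forces $\int_X u_j\, d\mu \to \int_X u\, d\mu$ along the subsequence. Having $L^1(d\mu)$-convergence of the integrals together with the almost-everywhere monotone sandwiching lets me invoke a Fatou/Vitali argument to upgrade to pointwise $d\mu$-a.e.\ convergence: since $v_k\to u$ decreasingly $\mu$-a.e.\ and $\liminf_j u_j \ge u$ holds $\mu$-a.e.\ (the set $A=\{\limsup u_j < u\}$ being pluripolar hence $\mu$-null, exactly as in the proof of Lemma 1), while $u_j \le v_k$ pins the limsup from above, the two inequalities squeeze $u_j \to u$ pointwise $d\mu$-a.e.\ on the chosen subsequence.

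The step I expect to be the main obstacle is the construction of the dominating function $u_0\in\m E(X,\theta)\cap L^1(d\mu)$ needed to legitimately apply Lemma 1: an arbitrary weakly convergent sequence in $\m E(X,\theta)$ carries no a priori uniform integrability against $\mu$, and one must either exhaust $X$ by the sublevel sets $\{u>-m\}$ and patch the conclusions, or use the comparison principle and the full-mass property defining $\m E(X,\theta)$ to manufacture such a lower envelope. Once $u_0$ is in hand, the remainder is the pluripolarity of the negligible set $A$ (via \cite{BT 1}) and the Lebesgue/Fatou machinery already rehearsed in the proof of Lemma 1, so the pointwise $d\mu$-a.e.\ extraction follows without further surprises.
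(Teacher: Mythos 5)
The paper itself gives no written proof of this corollary (it is presented as the global version of Corollary 1.4 in \cite{CK6}), so the benchmark is the argument of \cite{CK6}: truncate, form the envelopes $v_k=(\sup_{j\ge k}u_j)^*$, prove $\int_X(v_k-u_k)\,d\mu\to 0$ via Lemma 2.1, and extract an a.e.-convergent subsequence of the nonnegative functions $v_k-u_k$. Your outline contains all of these ingredients, but the step you actually use to conclude is false. You claim $\liminf_j u_j\ge u$ holds $\mu$-a.e.\ because $A=\{\limsup_j u_j<u\}$ is negligible, hence pluripolar by \cite{BT 1}, hence $\mu$-null. The Bedford--Taylor theorem controls only the \emph{upper} envelope: it yields $\limsup_j u_j=u$ off a pluripolar set and says nothing about $\liminf_j u_j$; the set $\{\liminf_j u_j<u\}$ is in general neither negligible nor $\mu$-null. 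If your claim were true, the \emph{whole} sequence would converge $\mu$-a.e.\ (squeezed between your liminf bound and $u_j\le v_k\searrow u$), no subsequence would ever need to be extracted, and Lemma 2.1 would play no role; this is false. Concretely, on $X=\P^1$ with $\theta=\omega$ the Fubini--Study form, let $\mu$ be arc length on the unit circle (it vanishes on pluripolar sets) and $u_j:=\max(\epsilon_j g_{a_j},-1)$, where $g_a\le 0$ is the usual $\omega$-psh function with logarithmic pole at $a$, $\epsilon_j=1/\log j$, and the poles $a_j$ sweep the circle so that the arcs $\{g_{a_j}<-1/\epsilon_j\}$, of length comparable to $1/j$, cover it infinitely often (possible since $\sum 1/j=\infty$). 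These $u_j$ are bounded, hence lie in $\m E(X,\theta)$, they even satisfy the hypotheses of Lemma 2.1 with $u_0\equiv-1$, and $u_j\to 0$ in $L^1(X)$; yet $u_j=-1$ on the $j$-th arc, so $\liminf_j u_j=-1$ at every point of the circle, the full sequence converges at no point of a set of $\mu$-measure one, and your set $A$ does not even meet the circle. The subsequence must therefore come from the integral mechanism that you state but never actually use: $0\le v_j-u_j$ and $\int_X(v_j-u_j)\,d\mu\to 0$ (monotone convergence for $v_j\searrow u$, Lemma 2.1 for $u_j$ --- both legitimate once the lower-bound issue below is settled) imply that some subsequence of $v_j-u_j$ tends to $0$ $\mu$-a.e., and since $v_j\searrow u$ pointwise, $u_j\to u$ $\mu$-a.e.\ along that subsequence. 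The extraction happens exactly there, and nowhere in your write-up.

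Your handling of the hypothesis $u_j\ge u_0$ of Lemma 2.1 is also broken as written. Localizing on $\{u>-m\}$ bounds $u$ from below, not the $u_j$, and Lemma 2.1 is a global statement on $X$ that cannot be applied on a subset; sup-type constructions produce upper envelopes, not lower bounds; and $u_0=u$ is inadmissible because the weak limit $u$ of a sequence in $\m E(X,\theta)$ need belong neither to $\m E(X,\theta)$ nor to $L^1(d\mu)$ (so for the untruncated sequence the quantity $\int_X(v_j-u_j)\,d\mu$ can even be of the form $\infty-\infty$). The repair is the truncation you mention only in passing: for fixed $m$, the functions $u_j^m:=\max(u_j,-m)$ lie in $\m E(X,\theta)$, satisfy $u_j^m\ge-m$ with the constant $-m\in\m E(X,\theta)\cap L^1(d\mu)$, and converge to $u^m:=\max(u,-m)$ in $L^1(X)$ (a standard fact that must be checked), so the argument of the previous paragraph applies for each $m$ and yields a subsequence along which $u_j^m\to u^m$ $\mu$-a.e. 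One then takes a diagonal subsequence over $m\in\N$ and removes the truncation: $\{u=-\infty\}$ is pluripolar, hence $\mu$-null, and at any $x$ with $u(x)>-\infty$, choosing $m>|u(x)|+1$ gives $u_j^m(x)\to u^m(x)=u(x)>-m$, whence $u_j(x)=u_j^m(x)$ for large $j$ along that subsequence. The truncation, the per-$m$ extraction, the diagonal argument, and this un-truncation step are precisely what is missing from your outline.
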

\begin{proof}
[Proof of Main Theorem.] The set 
 of $\f \in  PSH(X, \theta)$
 normalized by
$\sup_X \f = 0 $ is compact (cf. \cite{GZ5}, \cite{GZ7}).
 Then there exists a
positive constant  $C_0>0$
such that 
$$
\int _X - u \theta^n  \le C_0, \quad \forall u \in PSH(X, \theta)
; \quad \sup _X u =0.
$$
 Consider the set 
$$
\m H := \left\lbrace \f \in PSH (X, \theta);  \f \le 0 \ \text{and\ }\ 
\int _X -\f \theta ^n \le C_0 \right\rbrace
$$
It is obvious that $\m H$ is compact convex subset of $L^1(X).$

From the conditions of the main theorem, 
there exists a real number $c_0$ such that 
$$
\int_X  F( c_0 ,\cdot) d \mu= \int_X\theta^n. 
$$ 
Fix a function $\f \in \m H.$ 
 Then there exists a 
real  number $c_\f \ge c_0$ such that 
$$
\int _X F(\f + c_\f, \cdot) d\mu  = \int _X \theta ^n .
$$
Since $F(\f + c_\f, \cdot) \in  L^1 (X, d\mu )  $ and $\mu$ vanishes on pluripolar sets, it follows by \cite{BEGZ} \cite{BGZ} that 
 there exists a  function
 $\tilde \f \in \m E(X, \theta)$ such that $\sup_X \tilde \f  = 0$ and 
$$
(\theta + dd ^c \tilde \f )^ n = F(\f + c_\f ,\cdot) d \mu.
$$
The function $\tilde \f$ does not depend in the constant $c_\f.$
Indeed, assume that there exist two constant $c_\f$ and $c^\prime _\f$ such that 
$$
\int_X F(\f + c_\f ,\cdot) d \mu=\int_X  F(\f + c^\prime_\f ,\cdot) d \mu= \int_X
\theta^ n  .
$$
If $c_\f \le c^\prime _\f$ then $F(\f + c_\f ,\cdot) d \mu \le   F(\f + c^\prime_\f ,\cdot) d\mu $. 
Thence
$$ F(\f + c_\f ,\cdot) d \mu=  F(\f + c^\prime_\f ,\cdot) d \mu. $$
By the uniqueness result in \cite{BEGZ}  and \cite{Di},
 we get that $\tilde \f$ is unique and therefore independent of the constant $c_\f .$
 
From the definition of $\m H$ we have $\tilde \f \in \m H.$ 
Consider the map $\Phi : \m H \to \m H$ defined 
by $\f \mapsto \tilde \f .$ In fact, the range of $\Phi $ is equal to $ \m H \cap \m E(X , \theta).$ 

We claim that $\Phi $ is continuous.
Indeed, let $\f_j\in \m H$ be a converging sequence
 with limit 
$\f \in \m H $ in $L^1 (X)$-topology.
Let $\psi $ be any cluster point of the sequence $\tilde \f_j:
= \phi (\f_j ).$ 
We may assume, up to extracting,
that $\tilde \f _j $ converges towards $\psi $ in $L^1(X).$
Since the measure $\mu $ vanishes on pluripolar subsets then by
Corollary \ref{Ceg-kol} above
we can extract a subsequence, which 
still denoted by $\f_j $, so that $\f_j \to \f $  $\mu$-a.e.
 We claim that  the sequence $c_{\f_j}$ is bounded. Indeed, by construction we have $c_{\f_j}\ge c_0.$
 Now if $c_{\f_j} \to +\infty$ then 
$$
\int_X \theta^n = \liminf _{j\to \infty}\int_X  F(  \f_j + c_{\f_j} ,\cdot) d \mu > \int_X \theta^n  , 
$$
which is impossible.
 
 So by passing to a subsequence,  we may assume that 
 $c_{\f_j} \to c.$ Therefore
$F(\f_j + c_{\f_j}, \cdot ) \to F(\f + c , \cdot)$ in $L^1(d\mu).$
Since $\tilde \f_j \to \psi $ in $L^1 (X),$
 then $\psi = ( \limsup _{j\to +\infty } \tilde \f _j )^* $ and 
 therefore by Hartogs' lemma $\sup_X \psi = 0.$ 
 It follows from Proposition 3.2 in \cite{BEGZ} that $\psi \in \m E(X, \theta).$
 Let denote 
 $\psi_j := (\sup_{k\ge j} \tilde \f_k)^* = 
 (\lim_{l\to +\infty} \max_{l\ge k\ge j} \tilde \f_k )^*
 .$
Since the set $ (\sup_{k\ge j} \tilde \f_k < (\sup_{k\ge j} \tilde \f_k)^* )$ is pluripolar then
 by the continuity of the complex Monge-Amp\`ere operator
  along  monotonic
 sequences we have
 \begin{eqnarray*}
 (\theta + dd ^c \psi)^n &= &\lim_{j\to +\infty}
 (\theta + dd ^c \psi_j)^n\\
 &=&\lim_{j\to +\infty} \lim_{l\to +\infty}
 (\theta + dd ^c \max_{l\ge k\ge j} \tilde \f_k)^n \\
 &\ge & \lim_{j\to +\infty} \lim_{l\to +\infty} 
 \min_{l \ge k\ge j} F(\f_k + c_{\f_k}, \cdot)d\mu \\
 &=&\liminf_{j\to +\infty} F(\f_j + c_{\f_j}, \cdot)d\mu\\
 &=&F(\f + c_\f , \cdot) d \mu.
  \end{eqnarray*} 
Thence $(\theta + dd ^c \psi)^n = (\theta + dd ^c \tilde\f)^n.$
 By  uniqueness (shown in \cite{BEGZ})
we get  $\tilde \f = \psi$
and therefore $\Phi$ is continuous. Now, Shauder's fixed
 point theorem implies that there exists a function
$u\in \m H$ such that  $\Phi (u)= u.$ Since $\Phi (\m H) \subset \m E(X, \theta) $ we have $u \in \m E(X, \theta)$ and 
$$
(\theta + dd ^c u )^ n = F(u + c_u ,\cdot) d \mu.
$$
The function $\f := u+c_u$ is the required solution.

Uniqueness follows in a classical way from the comparison principle 
 \cite{BT 1} and  its generalization \cite{BEGZ}.
Indeed assume that there exist tow solutions
 $\f _1$ and $\f_2$ in $\m E(X, \theta )$ such that 
 \begin{equation} \label{eq1}
 (\theta + dd^c \f_i)^n = F(\f_i,.),\quad i=1, 2.
 \end{equation}
 Then, since $F$ is non-decreasing with respect to the first variable, we have 
 \begin{equation} \label{eq2}
  F(\f_1 , .)d\mu \le  F(\f_2 , .)d\mu \ \text{on}\ {(\f_1<\f_2)}. 
 \end{equation}
 On the other hand, by the comparison principle we have
 \begin{equation} \label{eq3}
  \int_{(\f_1<\f_2)} (\theta + dd^c \f_2)^n
 \le \int_{(\f_1<\f_2)} (\theta + dd^c \f_1)^n .
 \end{equation}
 Combining (\ref{eq1}), (\ref{eq2}) and ( \ref{eq3}), we get
$$
\int_{(\f_1<\f_2)} F(\f_1 , .)d\mu \le \int_{(\f_1<\f_2)} F(\f_2 , .)d\mu 
\le \int_{(\f_1<\f_2)} F(\f_1 , .)d\mu .
$$ 
Hence
  $$
 F(\f_1 , .) = F(\f_2 , .) \mu-\text{almost everywhere} \quad \text{on}\ (\f_1 < \f_2) .
  $$
 In the same way, we get the equality on $(\f_1>\f_2)$ and then on $X.$ Hence
 $$(\theta + dd^c \f_1 )^n =(\theta + dd^c \f_2)^n.$$
 Once more, the uniqueness result of 
   \cite{BEGZ} and \cite{Di} implies that
   $\f_1 - \f_2 = Cst$.
\end{proof}
\section*{Acknowledgements}
This note was written during the
author's visit to Institut de Math\'ematiques de Toulouse . 
  He wishes to thank Vincent Guedj and Ahmed Zeriahi  for fruitful discussions
and the warm hospitality. He also wishes to thank the referee 
for his careful reading and for his remarks which helped to
improve the exposition.


\begin{thebibliography}{99}







\bibitem  {A1} Aubin, Thierry: Equations du type Monge-Amp\`ere sur les vari\'et\'es k\"ahl\'eriennes compactes.
 C. R. Acad. Sci. Paris  283 (1976), 119--121.
 
 
\bibitem  {A2}  Aubin, Thierry: Equations du type Monge-Amp\`ere sur les vari\'et\"es k\"ahl\'eriennes compactes.
  Bull. Sci. Math.  102 (1978),  63--95.

 \bibitem  {BT 1} Bedford, E; Taylor, B.A: A new capacity for
 plurisubharmonic
   functions. Acta Math. {\bf 149} (1982), no. 1-2, 1--40.

  \bibitem  {BT 2} Bedford, E; Taylor, B.A:    Fine topology, \v Silov
boundary,
  and $(dd\sp c)\sp n$.  J. Funct. Anal.  {\bf 72}  (1987),  no. 2, 225--251.





\bibitem  {BGZ} Benelkourchi S; Guedj V; Zeriahi, A: A priori
 estimates
 for weak solutions of
  complex Monge-Amp\`ere equations., Ann. Scuola Norm. Sup. Pisa
Cl. Sci. (5), Vol. VII(2008), 1-16.



\bibitem {BBGZ}  Berman, Robert J.; Boucksom, Sébastien; Guedj, Vincent; Zeriahi, Ahmed:
 A variational approach to complex Monge-Amp\`ere equations.
  Publ. Math. Inst. Hautes \'Etudes Sci. 117 (2013), 179--245.



 
\bibitem {BEGZ} Boucksom, Sbastien; Eyssidieux, 
Philippe; Guedj, Vincent; Zeriahi: Ahmed Monge-Ampre equations 
in big cohomology classes. Acta Math. 205 (2010), no. 2, 199--262.


 



   
 \bibitem  {Ce1} Cegrell Urban:  Convergence in capacity.
 Isaac Newton Institute for Mathematical Sciences preprint series
NI01046-NPD (2001). arxiv:math/0505218v1


 \bibitem  {Ce} Cegrell Urban: Pluricomplex energy. Acta Math. 
 {\bf 180}
 (1998), no. 2, 187--217.



\bibitem{CK6} Cegrell, Urban; Ko\l odziej, S\l awomir. The equation of complex Monge-Ampre type and stability of solutions.
 Math. Ann. 334 (2006), no. 4, 713--729.











\bibitem  {Di} Dinew, S\l awomir:
Uniqueness in $\m E(X, \omega)$. J. Funct. Anal. 256 (7) (2009), 2113--2122.
    
\bibitem{Ed}  Edwards, R. E. Functional analysis : theory and applications. Holt-Rinehart and Winston, 1965.
    
     
    
     

 \bibitem {GZ7} Guedj, Vincent; Zeriahi, Ahmed: The weighted Monge-Amp\`ere energy
  of quasiplurisubharmonic functions.  J. Funct. Anal.  250  (2007),
no. 2, 442--482.

 \bibitem {GZ5} Guedj, Vincent; Zeriahi, Ahmed: Intrinsic capacities on compact
  Kahler manifolds. J. Geom. Anal. 15 (2005), no. 4, 607--639.



 \bibitem  {K05} Ko\l odziej, S\l awomir: The complex Monge-Amp\`ere equation and
 pluripotential
  theory.
   Mem. Amer. Math. Soc.   178  (2005),  no. 840, x+64 pp.
   
   \bibitem {K00} Ko\l odziej, Slawomir: Weak solutions of equations
  of complex Monge-Ampère type. Ann. Polon. Math. 73 (2000), no. 1, 59--67.





\bibitem{Lu} Lu Hoang Chinh: Solutions to degenerate complex Hessian equations. J. Math. Pures Appl. 100 (2013) 785--805


\bibitem {Yau} Yau, S.T. On the Ricci curvature of a compact K\"ahler manifold and the complex
 Monge- Amp\`ere equation. I,
 Comm. Pure Appl. Math. 31 (1978), no. 3, 339--411. 


\end{thebibliography}
\end{document}